\numberwithin{equation}{section}
\newtheorem{Theorem}{Theorem}
\numberwithin{Theorem}{section}
\newtheorem{Lemma}[Theorem]{Lemma}
\newtheorem{Proposition}[Theorem]{Proposition}
\newtheorem{Corollary}[Theorem]{Corollary}
\title[Regularity of the free boundary]{Regularity
of the free boundary in a nonlocal one-dimensional
parabolic free boundary value problem}
\author{Rossitza Semerdjieva}
\address{%
Institute of Mathematics and Informatics\\
Bulgarian Academy of Sciences\\
Acad. G. Bonchev Str., Bl. 8\\
1113 Sofia, Bulgaria}
\email{rsemerdjieva@yahoo.com, rossitza@math.bas.bg}
\begin{document}

\begin{abstract}
We consider one-dimensional parabolic free boundary value problem
with a nonlocal (integro-differential) condition on the free
boundary. Results on $C^m$-regularity  of the free boundary are
obtained. In particular, a necessary and sufficient condition for
infinite differentiability of the free boundary is given.
\end{abstract}

\subjclass{Primary 35R35; Secondary 35K10, 35B65}

\keywords{Free boundary problem, parabolic equation, mixed type
boundary conditions, regularity, nonlocal condition}

 \maketitle

\section{Introduction}

In this paper we study the regularity properties of the free boundary
in the following one-dimensional parabolic free boundary value
problem.

{\em Problem P}.  Find $s(t)>0$ and $u(x,t) $ such that
\begin{equation}
\label{1}  u_t = u_{xx}- \lambda \,u, \quad  \lambda =const >0, \quad
0 < x < s(t), \;\; t>0,
\end{equation}
\begin{equation}
\label{3} u (0, t) = f(t),   \quad  t\geq 0,
\end{equation}
\begin{equation}
\label{4}  u(x,0) = \varphi (x),   \;\; x\in [0,b], \;\; s(0)=b >0,
\quad  \varphi (0) = f(0),
\end{equation}
\begin{equation}
\label{5} u_x (s(t), t) = 0, \quad t>0,
\end{equation}
\begin{equation}
\label{2}  s^\prime (t) =  \int_0^{s(t)}  (u(x,t)-\sigma)
  dx,   \quad \sigma = const >0, \;  \;  t > 0.
\end{equation}

Notice that (\ref{3})--(\ref{5}) are mixed type boundary conditions
for the parabolic equation (\ref{1}), and  (\ref{2}) is an
integro-differential condition on the free boundary $x= s(t). $
Similar free boundary value problems arise in tumor modeling and
modeling of nanophased thin films (see \cite{CF02, FR99, Fr07, CGP}).

Our goal in this paper is to examine the relationship between the
smoothness of the functions $f(t) $ and $s(t),$ and to show the
essential impact of the nonlocal character of condition (\ref{2}) on
the regularity properties of the free boundary.

If one sets $\lambda=0$ in (\ref{1}), and replaces our conditions
(\ref{5}) and (\ref{2}) by the conditions
\begin{equation}
\label{5a} u (s(t), t) = 0 \quad \text{for} \;\; t>0
\end{equation}
and
\begin{equation}
\label{2a} s^\prime (t) =- u_x (s(t),t)\quad \text{for} \;\; t > 0
\end{equation}
respectively, the resulting problem (\ref{1}) (with $\lambda=0$),
(\ref{3}), (\ref{4}), (\ref{5a}) and (\ref{2a}) is the classical
one-dimensional Stefan problem (see \cite[Ch. 8]{F}, \cite{Rub},
\cite[Ch. 17]{C84}). In this context, the infinite differentiability
of the free boundary has been established in \cite{CH68, CP71, CLS,
Sch}.

On the other hand, in \cite{F76} it is proved that if $f(t)$ is an
analytic function then $s(t), \; t>0$ is also an analytic function.
In addition, the analyticity of $s(t)$ at $t=0 $ is studied in
\cite{FPR80}.

There is a vast literature on the regularity of free boundaries in
multi-dimensional (multi-phase) Stefan problems and their
generalizations (e.g., see \cite{KindNir, EsSim, PSS} and the
bibliography therein). But in general the one-dimensional free
boundary problems cannot be treated as a partial case of
multidimensional ones, and their handling requires specific methods.

Following the method in \cite{Sch}, our approach in studying the
regularity of the free boundary in Problem~$P$ is based on the
theory of anisotropic H\"older spaces. In Section 2 we estimate from
below the H\"older and $C^m$-smoothness of the free boundary. In
Theorem~\ref{thm1}, we prove that if $f(t)$ has continuous
derivatives up to order $m$ on $(0,T], \; T> 0, $ then $s(t)$ has
continuous derivatives up to order $m+1$ on $(0,T].$ Therefore, if
$f(t) $ is infinitely differentiable on $(0,\infty)$, it follows
that $s(t)$ is infinitely differentiable on $(0,\infty)$ as well.

However, it turns out that $s(t) $ may not have derivatives of order
higher than two if we assume $f(t) \in C^1 ([0,\infty))$ only (see
Section~3, where we estimate the smoothness of $s(t)$ from above).
More generally, in Theorem~\ref{thm2} we prove that if $s(t)$ has on
$(0,T]$ continuous derivatives up to order $m+2$ then $f(t)$ has
continuous derivatives up to order $m$ on $(0,T].$ Therefore, if
$f(t) $ is not infinitely differentiable on $(0, \infty),$ then the
free boundary is not infinitely differentiable curve as well.

This is in a striking contrast with the case of one-dimensional
Stefan problem, where the infinite differentiability of the free
boundary does not require infinite differentiability of the boundary
data at $x=0$ (see \cite{CH68}, \cite{CP71}, \cite{CLS}, \cite{Sch}).
In our Problem $P,$ due to the nonlocal character of condition
(\ref{2}), the smoothness of the free boundary is essentially related
to the smoothness of $f(t),$ namely the free boundary is an
infinitely differentiable curve if and only if the function $f(t)$ is
infinitely differentiable.

\section{Lower bounds for the smoothness of the free boundary}
Results on global existence and uniqueness of classical solutions of
Problem $P$ are obtained in \cite[Theorem 1.1]{R3} (see also
\cite{R1}, \cite{R2}). More precisely, the following holds.

{\em Global solvability of Problem $P:$ Suppose
\begin{eqnarray}
\label{6} f(t) \in C^1([0,\infty)), \quad \varphi (x) \in C^2([0,b]), \quad f(0) =\varphi (0),\\
\nonumber   f^\prime (0) = \varphi^{\prime \prime} (0)-\lambda
\varphi (0), \qquad \varphi^\prime (b) =0.
\end{eqnarray}
Then there exists a unique pair of functions $u (x,t)$ and $s(t)
$ such that \\
(i) \hspace{1mm} $u (x,t) $ is defined, continuous and has continuous
partial derivatives $u_x, \, u_t, \, u_{xx} $
 in the domain
$\{(x,t): \;  0 \leq x \leq s(t), \, t \geq 0 \};$\\
(ii) \hspace{1mm}  $s(t) \in C^1 ([0,\infty)),
\quad s(t) > 0 \; \; \text{for} \;t\geq 0; $\\
(iii) the conditions (\ref{1})--(\ref{2}) hold.} \vspace{2mm}

Let the pair of functions $(u(x,t), s(t))$ be a classical solution of
Problem~$P$ satisfying (i)--(iii). It is easy to see that $s(t)\in
C^2 ([0, \infty)).$ Indeed, since $u_t (x,t)$ is defined and
continuous for $ 0 \leq x \leq s(t), \, t>0, $ from (\ref{2}) it
follows
$$
s^{\prime \prime}(t) = (u(s(t),t)- \sigma) s^\prime (t)+
\int_0^{s(t)} u_t (x,t) dx.
$$
By (\ref{1}) and (\ref{2}),
\begin{align*}
\int_0^{s(t)} u_t (x,t) dx&= \int_0^{s(t)}  (u_{xx} (x,t)-\lambda
u(x,t)) dx\\ &= u_x (s(t),t) -u_x (0,t)-\lambda s^\prime (t) -\lambda
\sigma s(t),
\end{align*}
so using (\ref{5}) we obtain
\begin{equation}
\label{10} s^{\prime \prime}(t) = (u(s(t),t)- \lambda -\sigma) \,
s^\prime (t) -\lambda \sigma s(t) - u_x (0,t),
\end{equation}
where the expression on the right is a continuous function for $t\geq
0,$ i.e., $s (t) \in C^2 ([0,\infty)).$

In this section, our main result is the following statement.
\begin{Theorem}
\label{thm1} Suppose the pair of functions $(u(x,t), s(t))$ is a
classical solution of Problem $P$ satisfying (i)--(iii).
 If $f(t) \in C^m ((0,T]),$ where  $m \in \mathbb{N}, \, m\geq 2$
and $T=const >0, $ then $s(t) \in C^{m+1} ((0,T]).$

In particular, if $f(t) \in C^\infty ((0,\infty)),$ then $s(t) \in
C^\infty ((0,\infty)).$
\end{Theorem}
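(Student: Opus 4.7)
The plan is an induction on $m$, starting from the known fact $s\in C^2([0,\infty))$ established via (\ref{10}). The inductive hypothesis at level $m\geq 2$ reads: \emph{if $f\in C^m((0,T])$ then $s\in C^m((0,T])$}; I will show this implies $s\in C^{m+1}((0,T])$. The argument combines a change of variables that straightens the moving boundary, classical anisotropic H\"older (Schauder) estimates for the rectified equation, and the formula (\ref{10}) itself to recover the top-order derivative of $s$.

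\emph{Rectification and Schauder bootstrap.} Setting $y = x/s(t)$ and $v(y,t) = u(ys(t),t)$ one computes
\begin{equation*}
v_t \;=\; \frac{1}{s(t)^{2}}\, v_{yy} \;+\; \frac{y\,s'(t)}{s(t)}\, v_y \;-\; \lambda v \qquad \text{on } (0,1)\times(0,T],
\end{equation*}
with $v(0,t)=f(t)$ and $v_y(1,t)=s(t)u_x(s(t),t)=0$. On any subinterval $[\tau,T]$ with $\tau>0$ the inductive hypothesis places the coefficients $1/s(t)^2$ and $ys'(t)/s(t)$ in a suitable anisotropic H\"older class, and the Schauder theory of \cite{Sch} for mixed Dirichlet--Neumann parabolic problems, applied iteratively to the equations for $\partial_t^{\,j} v$ obtained by differentiating in $t$, yields $\partial_t^{\,j} v \in C^{2+\alpha,\,1+\alpha/2}$ for $j\leq m-1$. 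Consequently the traces
\begin{equation*}
u(s(t),t) = v(1,t),\qquad u_x(0,t) = v_y(0,t)/s(t)
\end{equation*}
belong to $C^{m-1}([\tau,T])$.

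\emph{Closing the loop via} (\ref{10}). Substituting these traces into
\begin{equation*}
s''(t) = (u(s(t),t)-\lambda-\sigma)\,s'(t) - \lambda\sigma s(t) - u_x(0,t)
\end{equation*}
makes the right-hand side of class $C^{m-1}([\tau,T])$, whence $s\in C^{m+1}([\tau,T])$. Since $\tau>0$ is arbitrary one concludes $s\in C^{m+1}((0,T])$; iterating over $m$ yields the final $C^\infty$ statement.

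\emph{Main obstacle.} The principal difficulty is the circular coupling: the $v$-equation already consumes one derivative of $s$ through the coefficient $s'(t)/s(t)$, so each Schauder step must supply strictly more trace regularity than it spends. Verifying that the anisotropic Schauder estimates actually propagate H\"older regularity of the correct order through both the Dirichlet boundary at $y=0$ and the Neumann boundary at $y=1$, uniformly in the time-dependent coefficients, is the technical heart of the argument, and is precisely where the anisotropic H\"older framework of \cite{Sch} (rather than an isotropic one) becomes essential.
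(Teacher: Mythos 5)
Your overall strategy --- rectify the domain via $\xi=x/s(t)$, bootstrap the regularity of $v$ by parabolic Schauder estimates, and then read off the extra regularity of $s$ from (\ref{10}) --- is exactly the paper's (Proposition~\ref{prop1} and Lemma~\ref{lem1}, which rely on Theorems 5.2 and 5.3 of \cite[Ch.~4]{LUS}; note that \cite{Sch} supplies the method, not the Schauder estimates themselves). However, the concrete derivative accounting in your bootstrap does not close, and this is a genuine gap rather than a presentational one. Your central claim is that $\partial_t^{\,j}v\in C^{2+\alpha,1+\alpha/2}$ for all $j\le m-1$. For $j=m-1$ the differentiated equation has right-hand side containing $\partial_t^{\,m-1}(s'/s)\,v_\xi$, hence $s^{(m)}$, and its Dirichlet datum is $f^{(m-1)}$; the Schauder estimate therefore requires $s^{(m)}$ and $f^{(m)}$ to be H\"older continuous. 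Your inductive hypothesis supplies only $s\in C^m$ and the theorem's hypothesis only $f\in C^m$, so neither requirement is met. If you retreat to $j\le m-2$ (which is what the hypotheses actually support), you obtain only $v_\xi(0,\cdot)\in C^{m-2,(1+\alpha)/2}$, hence $s''\in C^{m-2}$ and $s\in C^{m}$ --- one order short of the conclusion. The scheme of ``differentiate in $t$ and apply second-order Schauder'' advances the parabolic order of $v$ in steps of two and cannot land on the odd order that is actually needed.

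The paper resolves precisely this half-derivative deficit in two ways that your write-up gestures at but does not implement. First, the bootstrap (Lemma~\ref{lem1}) is run in the anisotropic scale $H^{k+\ell,\frac{k+\ell}{2}}$ and gains \emph{one} parabolic order per step, so that from $f\in C^m$, which gives $f\in H^{2m-1+\ell,\frac{2m-1+\ell}{2}}$ (only $f^{(m-1)}$ Lipschitz is used), one reaches $v\in H^{2m-1+\ell,\frac{2m-1+\ell}{2}}$; this already yields $v_\xi(0,\cdot)\in C^{m-1}$ without ever demanding H\"older continuity of $f^{(m)}$. Second, the nonlocal relation (\ref{11.3}) promotes $s$ to \emph{three} parabolic orders above $v$ (see (\ref{p.1})--(\ref{p.2})), so the coefficients $1/s^2$ and $\xi s'/s$ always stay two orders ahead of what the next Schauder step consumes; a $C^m$-level induction hypothesis on $s$ discards this surplus and is too weak to regularize the top-order coefficients. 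A further point you pass over is that the LUS boundary estimates apply to a single boundary-condition type with compatible initial data, which is why the paper localizes with cutoffs in $\xi$ (to separate the Dirichlet end $\xi=0$ from the Neumann end $\xi=1$) and in $t$ (to annihilate the initial data); some such localization is needed in your argument as well.
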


In the proof of Theorem~\ref{thm1} we need some preliminary results.
First we use the change of variables $ \xi=\frac{x}{s(t)}, \, t=t  $
to transform (\ref{1}) to an equation in a cylindrical domain by
setting
\begin{equation}
\label{11.0}  v(\xi,t)=u(\xi s(t),t),\quad Q=\{(\xi,t):\,0<\xi<1,\;
t>0\}.
\end{equation}
Then, in view of (i)--(iii), it follows that $v,\, v_\xi,\, v_t,\,
v_{\xi\xi} \in C(\overline{Q})$ and
 \begin{equation}
\label{11.1}
 Lv:=v_t-\frac{1}{s^2}v_{\xi
 \xi} - \frac{{\xi}s^{\prime}}{s}v_\xi - \lambda v=0 \quad \text{for} \; \; (\xi,t)\in
 Q ,
 \end{equation}
\begin{equation}
\label{11.2} v(0,t)=f(t),\quad v(\xi,0)=\varphi({\xi}s(0)),\quad
 v_{\xi}(1,t)=0.
 \end{equation}
 From (\ref{10}) we obtain
 \begin{equation}
\label{11.3}
 s^{\prime\prime}(t)=(v(1,t)- \lambda-\sigma) \, s^{\prime}(t)-\lambda
\sigma s(t) - \frac{1}{s(t)}v_{\xi}(0,t),\quad t\geq 0.
 \end{equation}
For convenience, we set
 \begin{equation}
\label{0.0}  Q^{\varepsilon, T}_{\delta_1, \delta_2} =\{(\xi,t) : \;
\delta_1 <\xi <\delta_2, \;  \varepsilon < t < T   \}.
\end{equation}

In order to prove Theorem~\ref{thm1} we are going to estimate the
H\"older smoothness of $v(\xi, t) $ and $s(t) $ in terms of the
H\"older smoothness of $f(t).$  To this end we use anisotropic
H\"older spaces $H^{m+\ell, \frac{m+\ell}{2}} \left
(\overline{Q^{\varepsilon, T}_{\delta_1, \delta_2}} \right ),$ where
$m=0,1,2, \ldots$ and $ \ell \in (0,1).$

Recall that $H^{m+\ell, \frac{m+\ell}{2}} \left (\overline{G} \right
)$  is the Banach space of all functions $v(\xi,t)$  that are
continuous on $\overline{G}$ together with all derivatives of the
form $D^k_\xi D^r_t v$ for $k+2r\leq m $ and have a finite norm
$$
\|v\|_G^{(m+l)}= \sum_{j=0}^m \sum_{k+2r=j} \left | D^k_\xi D^r_t v
\right |^{(0)} +\sum_{k+2r=m} \langle D^k_\xi D^r_t v
\rangle_{\xi,G}^{(\ell)} $$
$$
+\sum_{k+2r=m} \langle D^k_\xi D^r_t v \rangle_{t,G}^{(\ell/2)} +
\sum_{k+2r=m-1} \langle D^k_\xi D^r_t v \rangle_{t,G}^{\left (
\frac{1+\ell}{2}\right ) },
$$
where $G$ is a bounded rectangular domain, $\langle v
\rangle_{\xi,G}^{\ell} $ and $\langle v\rangle_{t,G}^{\ell} $ are the
H\"older constants of a function $v(\xi,t) $ in $\xi$ and $t$
respectively in the domain $\overline{G}$ with the exponent $\ell, \;
\ell \in (0,1), $ and $
 | D^k_\xi D^r_t v  |^{(0)} = \max_{\overline{G}} |
D^k_\xi D^r_t v |. $ For more details about these definitions and
notations we refer to the book \cite[Intr., p. 7]{LUS}.

In the following the functions of one variable $t$ are regarded as
functions of two variables $x$ and $t.$
\begin{Proposition}
\label{prop1}

(a) For every $T>0, \, \ell \in (0,1)$ we have
 \begin{equation}
\label{p.0} v(\xi,t) \in H^{1+\ell,\frac{1+\ell}{2}}
 \left (\overline{Q^{0, T}_{0,1}} \right ).
 \end{equation}

(b) If $f(t) \in H^{m+\ell,\frac{m+\ell}{2}}
 \left (\overline{Q^{\varepsilon, T}_{0,1}} \right ), $
 where $m\in \mathbb{N}, \, m\geq 2, \; \ell \in (0,1) $ and $
T> \varepsilon >0, $    then
\begin{equation}
\label{p.1} v(\xi,t) \in H^{m+\ell,\frac{m+\ell}{2}}
 \left (\overline{Q^{\tilde{\varepsilon}, T}_{0,1}} \right ) \quad
 \forall \tilde{\varepsilon} \in (\varepsilon, T),
  \end{equation}
  and
\begin{equation}
\label{p.2}  s(t) \in H^{m+3+\ell,\frac{m+3+\ell}{2}}\left
(\overline{Q^{\tilde{\varepsilon}, T}_{0,1}} \right ) \quad
 \forall \tilde{\varepsilon} \in (\varepsilon, T).
 \end{equation}
\end{Proposition}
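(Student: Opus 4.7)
My plan is to derive (a) from classical parabolic H\"older theory applied directly to the initial--boundary value problem (\ref{11.1})--(\ref{11.2}), and (b) from a bootstrap induction on $m$ that alternates two mechanisms: a Schauder estimate for the linear parabolic operator $L$ (whose coefficients inherit their regularity from $s$) to raise the regularity of $v$, and the ODE (\ref{11.3}) to transfer the time-H\"older regularity of the traces $v(1,t)$ and $v_\xi(0,t)$ back to $s$.

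For (a), equation (\ref{10}) gives $s\in C^2([0,\infty))$, so the coefficients $1/s^2(t)$, $\xi s'(t)/s(t)$, $-\lambda$ of $L$ are continuous on $\overline{Q^{0,T}_{0,1}}$. The hypotheses (\ref{6}) supply the first-order compatibility of the Dirichlet datum $f$, the Neumann condition $v_\xi(1,t)=0$, and the initial datum $\varphi$ at the parabolic corners $(0,0)$ and $(1,0)$. The standard H\"older estimate up to the parabolic boundary (cf.\ \cite{LUS}) then yields (\ref{p.0}) for every $\ell\in(0,1)$.

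For (b), I would induct on $m\ge 2$, at each step shrinking the time-interval slightly and using a smooth cutoff in $t$ to avoid spurious compatibility requirements at the new starting time. For the base case $m=2$, the regularity $v\in H^{1+\ell,(1+\ell)/2}$ from (a) together with $s\in C^2$ provides far more regularity of the coefficients of $L$ than the Schauder estimate in $H^{2+\ell,1+\ell/2}$ demands, so the hypothesis $f\in H^{2+\ell,1+\ell/2}$ and the homogeneous Neumann condition at $\xi=1$ yield $v\in H^{2+\ell,1+\ell/2}(\overline{Q^{\varepsilon_1,T}_{0,1}})$ for some $\varepsilon_1\in(\varepsilon,\tilde\varepsilon)$. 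Taking traces, $v(1,\cdot)\in C^{1+\ell/2}$ and $v_\xi(0,\cdot)\in C^{(1+\ell)/2}$ on $[\varepsilon_1,T]$; the right-hand side of (\ref{11.3}) therefore lies in $C^{(1+\ell)/2}$, so $s\in C^{(5+\ell)/2}$, that is, $s\in H^{5+\ell,(5+\ell)/2}$. The inductive step is structurally identical: from $v\in H^{m-1+\ell,(m-1+\ell)/2}$ and $s\in H^{m+2+\ell,(m+2+\ell)/2}$ on the current subdomain, the coefficients of $L$ are in H\"older classes well above what is needed to raise $v$ by one unit, and the Schauder estimate together with $f\in H^{m+\ell,(m+\ell)/2}$ delivers $v\in H^{m+\ell,(m+\ell)/2}$ on a slightly smaller subdomain; re-inserting the new traces into (\ref{11.3}) produces $s\in H^{m+3+\ell,(m+3+\ell)/2}$.

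The principal technical obstacle is the careful tracking of H\"older exponents across the bootstrap. One must verify that each application of (\ref{11.3}) really gains three units of regularity for $s$ relative to the current regularity of $v$: the slowest term $v_\xi(0,t)/s(t)$ has time-H\"older exponent $(m-1+\ell)/2$, which makes $s''$ of the same class and hence $s$ of class $C^{(m+3+\ell)/2}$. A secondary point is that the coefficient $\xi s'(t)/s(t)$ is $\xi$-dependent, but being linear in $\xi$ it fits into the parabolic Schauder framework without extra bookkeeping; and the Neumann condition $v_\xi(1,t)=0$, being homogeneous, imposes no compatibility of its own beyond what is already inherited from the previous induction step.
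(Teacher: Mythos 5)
Your treatment of part (b) is essentially the paper's argument: the same alternating bootstrap between a Schauder estimate for $L$ (raising the regularity of $v$ by one unit) and the ODE (\ref{11.3}) (keeping $s$ three units above $v$), with a smooth cutoff in time to avoid compatibility conditions at the new initial instant; you also correctly identify $v_\xi(0,t)$ as the term that limits the gain in (\ref{11.3}). The one ingredient you treat as a black box is the Schauder estimate itself. The problem (\ref{11.1})--(\ref{11.2}) has a Dirichlet condition at $\xi=0$ and a Neumann condition at $\xi=1$, and the theorems of \cite[Ch.~4]{LUS} (Theorems 5.2 and 5.3) each handle a single type of condition on the whole lateral boundary. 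The paper's Lemma~\ref{lem1} supplies exactly this missing step by localizing in $\xi$ as well as in $t$: a cutoff supported in $\{\xi\le\delta\}$ reduces the question to the first boundary value problem near $\xi=0$, and a cutoff supported in $\{\xi\ge\delta_1\}$ reduces it to the second boundary value problem near $\xi=1$. This is routine, but it is the actual technical content of the proof and your proposal omits the spatial localization entirely (you only mention the cutoff in $t$).

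Part (a) is where there is a genuine gap. You propose to obtain $v\in H^{1+\ell,\frac{1+\ell}{2}}\left(\overline{Q^{0,T}_{0,1}}\right)$ from a boundary Schauder estimate using the compatibility conditions in (\ref{6}). But the Schauder theorems available in \cite{LUS} are stated for exponents $2+\ell$ and higher; there is no off-the-shelf estimate producing a solution of exactly class $H^{1+\ell,\frac{1+\ell}{2}}$ from data $f\in C^1$, $\varphi\in C^2$ and first-order compatibility, and the mixed boundary conditions compound the difficulty. The paper takes a different and simpler route that needs neither compatibility conditions nor a boundary estimate: by property (i) of the global solvability statement, $v,\,v_\xi,\,v_{\xi\xi},\,v_t$ are already continuous on $\overline{Q^{0,T}_{0,1}}$, hence belong to $L^q$ for every $q>1$, and the embedding Lemma~3.3 of \cite[Ch.~2]{LUS} then yields the H\"older continuity of $v_\xi$ in $t$ with exponent $1-3/q$ for every $q>3$, which is the only nontrivial seminorm in (\ref{p.0}). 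You should replace your derivation of (a) by this (or an equivalent interpolation) argument; as written, the step ``the standard H\"older estimate up to the parabolic boundary then yields (\ref{p.0})'' cites a theorem that does not exist in the form required.
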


 The following lemma helps to make  the inductive step in the proof of
Proposition~\ref{prop1}.
\begin{Lemma}
\label{lem1} Let $\varepsilon>0, $  $\ell \in (0,1), \; k \in
\mathbb{N}, $
\begin{equation}
\label{21.0} w(\xi, t) \in H^{k+\ell,\frac{k+\ell}{2}}
 \left (\overline{Q^{\varepsilon, T}_{0,1}} \right )
 \end{equation}
and $w_{\xi \xi} $ exists in $ Q^{\varepsilon, T}_{0,1}$ in the case
$k=1,$ and let $w(\xi,t)$ satisfy the equation
 \begin{equation}
\label{21.1} \tilde{L} w:=w_t - a(\xi,t) w_{\xi \xi} - b (\xi, t)
w_\xi - c (\xi, t) w = F(\xi,t), \quad  (\xi, t) \in Q^{\varepsilon,
T}_{0,1},
\end{equation}
where
 \begin{equation}
\label{21.3} a, b, c, F  \in H^{k-1+\ell,\frac{k-1+\ell}{2}}
 \left (\overline{Q^{\varepsilon, T}_{0,1}} \right ), \quad a(\xi,t) \geq const
 >0.
  \end{equation}
If
 \begin{equation}
\label{21.2} w(0,t)= f(t), \quad w_\xi (1,t) = g(t), \quad
\varepsilon \leq t \leq T,
\end{equation}
with
 \begin{equation}
\label{21.4} f(t) \in H^{k+1+\ell,\frac{k+1+\ell}{2}}
 \left (\overline{Q^{\varepsilon, T}_{0,1}} \right ), \quad
 g(t)  \in H^{k+\ell,\frac{k+\ell}{2}}
 \left (\overline{Q^{\varepsilon, T}_{0,1}} \right ),
 \end{equation}
then
\begin{equation}
\label{21.5} w(\xi, t) \in H^{k+1+\ell,\frac{k+1+\ell}{2}}
 \left (\overline{Q^{\tilde{\varepsilon}, T}_{0,1}} \right ) \quad \forall
 \tilde{\varepsilon} \in (\varepsilon, T).
 \end{equation}
\end{Lemma}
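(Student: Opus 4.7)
The plan is to transform the equation (\ref{21.1}) with boundary conditions (\ref{21.2}) into an initial--boundary value problem on $\overline{Q^{\varepsilon,T}_{0,1}}$ with vanishing Cauchy data at $t=\varepsilon$ by means of a smooth time cutoff, and then to invoke an anisotropic Schauder estimate for linear parabolic equations from \cite{LUS} in order to gain one extra derivative. The cutoff trick eliminates the compatibility conditions at the corners $\xi=0,1$, $t=\varepsilon$ that the Schauder theorem would otherwise require; in exchange, the estimate holds only on the slightly smaller time interval $[\tilde{\varepsilon},T]$. The key quantitative observation is that the hypotheses (\ref{21.3}) and (\ref{21.4}) supply exactly the anisotropic H\"older regularity of the coefficients, the right-hand side, and the boundary data needed by the theorem to bootstrap from $H^{k+\ell}$ to $H^{k+1+\ell}$.

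Concretely, I would fix $\varepsilon<\varepsilon_1<\tilde{\varepsilon}$ and a cutoff $\eta\in C^\infty(\mathbb{R})$ with $\eta\equiv 0$ on $(-\infty,\varepsilon_1]$ and $\eta\equiv 1$ on $[\tilde{\varepsilon},\infty)$, and set $\tilde w:=\eta(t)w$. A direct computation using (\ref{21.1}) shows that $\tilde w$ solves
\[
\tilde L\tilde w=\eta(t)F+\eta'(t)w=:\tilde F\quad\text{in } Q^{\varepsilon,T}_{0,1},
\]
with boundary data $\tilde w(0,t)=\eta(t)f(t)=:\tilde f(t)$, $\tilde w_\xi(1,t)=\eta(t)g(t)=:\tilde g(t)$, and zero initial data at $t=\varepsilon$. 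Since $\eta$ vanishes to infinite order at $t=\varepsilon$, all derivatives of $\tilde F$, $\tilde f$, $\tilde g$ vanish there as well, so the compatibility conditions are trivially satisfied. The hypothesis (\ref{21.0}) gives $\eta'(t)w\in H^{k+\ell,(k+\ell)/2}\subset H^{k-1+\ell,(k-1+\ell)/2}$, hence by (\ref{21.3}) we have $\tilde F\in H^{k-1+\ell,(k-1+\ell)/2}$; similarly (\ref{21.4}) yields $\tilde f\in H^{k+1+\ell,(k+1+\ell)/2}$ and $\tilde g\in H^{k+\ell,(k+\ell)/2}$. Applying the anisotropic Schauder estimate from \cite[Ch.~IV]{LUS} for linear parabolic equations with mixed Dirichlet--Neumann boundary conditions then produces a solution $W\in H^{k+1+\ell,(k+1+\ell)/2}(\overline{Q^{\varepsilon,T}_{0,1}})$ of the same initial--boundary value problem. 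Since $\tilde w\equiv w$ on $\overline{Q^{\tilde{\varepsilon},T}_{0,1}}$, identifying $W$ with $\tilde w$ gives (\ref{21.5}).

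I expect the main obstacle to be precisely the identification $W\equiv\tilde w$: the Schauder theorem yields a solution in a strictly stronger class than the a priori regularity (\ref{21.0}) of $w$, and one must invoke a uniqueness result to conclude. For $k\geq 2$, both $W$ and $\tilde w$ are classical solutions of a linear parabolic problem with continuous coefficients and a standard maximum principle applies directly. The case $k=1$ is more delicate, because (\ref{21.0}) only gives $w\in H^{1+\ell,(1+\ell)/2}$ and does not a priori control $w_{\xi\xi}$; this is the exact reason for the auxiliary hypothesis that $w_{\xi\xi}$ exists in $Q^{\varepsilon,T}_{0,1}$, which is what makes $\tilde w$ a classical sub-/supersolution and allows the same maximum-principle argument to run. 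The remaining steps---verifying that the products with $\eta(t)$ lie in the claimed H\"older classes---reduce to routine application of the product rule for anisotropic H\"older norms.
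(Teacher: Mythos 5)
Your overall strategy --- multiply by a smooth time cutoff vanishing near $t=\varepsilon$ to kill the corner compatibility conditions, check that the new data inherit the H\"older classes from (\ref{21.0}), (\ref{21.3}), (\ref{21.4}), invoke the linear Schauder theory of \cite{LUS} to gain one derivative, and then identify the Schauder solution with the cutoff of $w$ --- is exactly the skeleton of the paper's proof, and your remarks on the identification step (uniqueness via the maximum principle, with the hypothesis that $w_{\xi\xi}$ exists being what makes the argument run when $k=1$) are sound.

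There is, however, one concrete gap: the theorem you invoke, an anisotropic Schauder estimate in \cite{LUS} for a problem carrying a Dirichlet condition at $\xi=0$ and a Neumann condition at $\xi=1$ simultaneously, is not available in that reference. Theorem 5.2 of \cite[Ch.~4]{LUS} treats the first (Dirichlet) boundary value problem and Theorem 5.3 treats the second/third (oblique-derivative) problem, each with a single type of condition on the entire lateral boundary; neither covers the mixed configuration of (\ref{21.2}). This is precisely why the paper does not work with a single global problem: it introduces, in addition to the time cutoff $\psi_2$, two spatial cutoffs $\psi_1$ and $\tilde\psi_1$. The product $w\psi_1\psi_2$ is supported in $\xi\le\delta<1$ and solves a pure Dirichlet problem on $Q^{\varepsilon,T}_{0,\delta}$ (the condition at the artificial boundary $\xi=\delta$ is homogeneous Dirichlet because the cutoff vanishes there), to which Theorem 5.2 applies and yields regularity on $\overline{Q^{\tilde\varepsilon,T}_{0,1/2}}$; symmetrically, $w\tilde\psi_1\psi_2$ solves a pure Neumann problem on $Q^{\varepsilon,T}_{\tilde\delta,1}$, handled by Theorem 5.3, yielding regularity on $\overline{Q^{\tilde\varepsilon,T}_{1/2,1}}$; the two overlapping estimates are then glued. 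The spatial localization also generates the extra commutator terms $-2aw_\xi\psi_\xi + w(\psi_t - a\psi_{\xi\xi}-b\psi_\xi)$ in the right-hand side, which must be (and are, by (\ref{21.0}) and (\ref{21.3})) in $H^{k-1+\ell,\frac{k-1+\ell}{2}}$. Without this step your appeal to a ``mixed Dirichlet--Neumann'' Schauder theorem is an appeal to a result you would still have to prove, and proving it amounts to carrying out exactly this localization.
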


 \begin{proof}  Fix  $\; \tilde{\varepsilon} \in (\varepsilon,T) $
and choose arbitrary  $\varepsilon_1 \in
(\varepsilon,\tilde\varepsilon) $ and $\delta \in (1/2, 1). $
  Consider a function $\psi(\xi,t)$ of the form
$ \psi(\xi,t)=\psi_{1}(\xi)\,\psi_{2}(t), $  where $ \psi_1, \psi_2
\in C^{\infty}(\mathbb{R}), \; $  $  0 \leq \psi_1 (\xi), \psi_2 (t)
\leq 1 $ and
\begin{equation}
\label{psi}
 \psi_1 (\xi) = \begin{cases}   1   &  \text{if} \;\;
\xi \leq 1/2 \\   0  & \text{if}  \; \;  \xi \geq \delta
\end{cases},   \qquad
\psi_2 (t) = \begin{cases}   1   &  \text{if}  \; \;   t \geq \tilde{\varepsilon} \\
0  & \text{if}  \; \;  t \leq \varepsilon_1
\end{cases}.
\end{equation}
Then the function $w_1 (\xi, t) = w (\xi, t) \, \psi (\xi, t)$ is a
solution of the boundary value problem
 \begin{equation}
\label{31.1} \tilde{L} w_1 = F_1(\xi,t), \quad (\xi, t) \in
Q^{\varepsilon, T}_{0,\delta},
\end{equation}
 \begin{equation}
\label{31.2} w_1 (0,t) = f(t) \psi_2 (t), \quad w_1 (\delta,t) = 0,
\;\; \varepsilon  \leq t \leq T,
\end{equation}
 \begin{equation}
\label{31.3} w_1 (\xi, \varepsilon) = 0, \quad 0 \leq \xi \leq
\delta,
\end{equation}
where
\begin{equation}
\label{31.4} F_1 (\xi, t) = F\, \psi + w \, \left ( \psi_t - a
\psi_{\xi \xi} - b \psi_\xi \right )  - 2a w_\xi \psi_\xi .
\end{equation}

In view of (\ref{21.0}) and (\ref{21.3}), it follows that $F_1(\xi,
t) \in H^{k-1+\ell,\frac{k-1+\ell}{2}}  \left
(\overline{Q^{\varepsilon, T}_{0,\delta}} \right ).$ Therefore,
applying Theorem 5.2 of \cite[Ch.4]{LUS}, we conclude that
Problem~(\ref{31.1})--(\ref{31.3}) has a unique solution $$w_1
(\xi,t) \in H^{k+1+\ell,\frac{k+1+\ell}{2}}
 \left (\overline{Q^{\varepsilon, T}_{0,\delta}} \right ).$$
Thus, taking into account the construction of the function $\psi
(\xi,t), $ we obtain
\begin{equation}
\label{31.6} w (\xi,t) \in H^{k+1+\ell,\frac{k+1+\ell}{2}}
 \left (\overline{Q^{\tilde{\varepsilon}, T}_{0,\frac{1}{2}}} \right ).
\end{equation}

Next, we fix $ \tilde{\delta} \in (0, 1/2)$  and choose, for
arbitrary $\delta_1 \in (\tilde{\delta}, 1/2),$  a function
$\tilde{\psi}_1 (\xi) \in C^\infty (\mathbb{R}) $ such that
$$ 0 \leq
\tilde{\psi}_1 (\xi) \leq 1, \quad  \tilde{\psi}_1 (\xi) =
\begin{cases} 1 &  \text{if} \;\; \xi \geq 1/2, \\   0  & \text{if}
\; \;  \xi \leq \delta_1.
\end{cases} $$
Now we set $\tilde{\psi} (\xi, t)  = \tilde{\psi}_1 (\xi)\, \psi_2
(t), $ where $\psi_2 (t)$ is given by (\ref{psi}). Then the function
$w_2 (\xi, t) = w (\xi, t) \, \tilde{\psi} (\xi, t)$ is a solution of
the boundary value problem
 \begin{equation}
\label{1.1a} \tilde{L} w_2 = F_2 (\xi,t), \quad (\xi, t) \in
Q^{\varepsilon, T}_{\tilde{\delta},1},
\end{equation}
 \begin{equation}
\label{1.2a} \partial_\xi w_2 \, (\tilde{\delta},t) = 0, \quad
\partial_\xi w_2 \, (1,t) = g(t) \psi_2 (t), \;\; \varepsilon \leq t
\leq T,
\end{equation}
 \begin{equation}
\label{1.3a} w_2 (\xi, \varepsilon) = 0, \quad \tilde{\delta} \leq
\xi \leq 1,
\end{equation}
where
\begin{equation}
\label{31.4a} F_2 (\xi, t) = F\, \tilde{\psi} + w \, \left (
\tilde{\psi}_t - a \tilde{\psi}_{\xi \xi} - b \tilde{\psi}_\xi \right
)  - 2a w_\xi \tilde{\psi}_\xi .
\end{equation}
From (\ref{21.0}) and (\ref{21.3}) it follows that  $F_2 (\xi, t) \in
H^{k-1+\ell,\frac{k-1+\ell}{2}}  \left (\overline{Q^{\varepsilon,
T}_{\tilde{\delta},1}} \right ). $ Therefore, by Theorem 5.3 of
\cite[Ch.4]{LUS}, Problem~(\ref{1.1a})--(\ref{1.3a}) has a unique
solution
$$w_2 (\xi,t) \in H^{k+1+\ell,\frac{k+1+\ell}{2}}
 \left (\overline{Q^{\varepsilon, T}_{\tilde{\delta},1}} \right ).$$
Now, taking into account the construction of the function
$\tilde{\psi} (\xi,t), $ we obtain
\begin{equation}
\label{31.6a} w (\xi,t) \in H^{k+1+\ell,\frac{k+1+\ell}{2}}
 \left (\overline{Q^{\tilde{\varepsilon}, T}_{\frac{1}{2}, 1}} \right ).
\end{equation}

Finally, (\ref{31.6}) and (\ref{31.6a}) imply that $w (\xi,t) \in
H^{k+1+\ell,\frac{k+1+\ell}{2}}
 \left (\overline{Q^{\tilde{\varepsilon}, T}_{0,1}} \right ),
$ which completes the proof of Lemma~\ref{lem1}.
\end{proof}
\bigskip

\begin{proof}[Proof of Proposition \ref{prop1}]
Since $v(\xi,t) \in C^{2,1} \left (\overline{Q_{0,1}^{0,T}} \right
),$ we have that $v, \, v_\xi,$ $\, v_{\xi \xi},$ $ v_t \in L^q \left
(\overline{Q_{0,1}^{0,T}} \right )$ for every $q>1.$ Therefore, by
Lemma~3.3 of \cite[Ch. 2]{LUS} it follows that $v_\xi (\xi,t)$ is
H\"older continuous in $t$ with exponent $ 1-3/q $ for every $ q>3.$
Hence (\ref{p.0}) holds.

We prove the assertion (b) by induction in $m.$ Let $m=2; $  suppose
that
$$
f(t) \in H^{2+\ell,\frac{2+\ell}{2}}
 \left (\overline{Q^{\varepsilon, T}_{0,1}} \right ), \quad \ell \in
 (0,1).
$$
From (\ref{p.0}) and (\ref{11.3}) it follows that
$$
s(t) \in H^{4+\ell,\frac{4+\ell}{2}}
 \left (\overline{Q^{\varepsilon, T}_{0,1}} \right ), \quad \ell \in
 (0,1).
$$
Now it is easy see that the coefficients of the operator $L$ in
(\ref{11.1}) satisfy the assumption (\ref{21.3}) in Lemma~\ref{lem1}
for $k=1.$ Therefore, applying Lemma~\ref{lem1} in the case $k=1$ to
the Problem (\ref{11.1})--(\ref{11.2}), we obtain that
$$
v(\xi,t) \in H^{2+\ell,\frac{2+\ell}{2}}
 \left (\overline{Q^{\tilde{\varepsilon}, T}_{0,1}} \right ) \quad
 \forall \tilde{\varepsilon} \in (\varepsilon, T).
$$
Then, in view of (\ref{11.3}), we conclude that
$$
s(t) \in H^{5+\ell,\frac{5+\ell}{2}}
 \left (\overline{Q^{\tilde{\varepsilon}, T}_{0,1}} \right ) \quad
 \forall \tilde{\varepsilon} \in (\varepsilon, T).
$$
Hence, (\ref{p.1}) and (\ref{p.2}) hold for $m=2, $  i.e., the
assertion (b) holds for $m=2.$

Assume that (b) holds for some $m\geq 2;$ we shall prove that (b)
holds for $m+1.$  Let $f(t) \in H^{m+1+\ell,\frac{m+1+\ell}{2}}
 \left (\overline{Q^{\varepsilon, T}_{0,1}} \right ).$
Then from the inductive hypothesis it follows that (\ref{p.1}) and
(\ref{p.2}) hold. Therefore, the coefficients of the operator $L$ in
(\ref{11.1}) satisfy (\ref{21.3}) with $k=m.$ Thus, by
Lemma~\ref{lem1} we conclude that $v(\xi,t) \in
H^{m+1+\ell,\frac{m+1+\ell}{2}}
 \left (\overline{Q^{\tilde{\varepsilon}, T}_{0,1}} \right ) \;
 \forall \tilde{\varepsilon} \in (\varepsilon, T),$
i.e., (\ref{p.1}) holds for $m+1.$ Now, in view of (\ref{11.3}), we
obtain that $ s(t) \in H^{m+4+\ell,\frac{m+4+\ell}{2}}
 \left (\overline{Q^{\tilde{\varepsilon}, T}_{0,1}} \right ), $
$\forall \tilde{\varepsilon} \in (\varepsilon, T).$ Hence,
(\ref{p.2}) holds for $m+1$ as well. This completes the proof of
Proposition~\ref{prop1}.
\end{proof}

\bigskip

\begin{proof}[Proof of Theorem \ref{thm1}] If $f(t) \in C^m ((0,T]) $ for
some $m>1,$  then $$f(t) \in H^{2m-1+\ell,\frac{2m-1+\ell}{2}}
 \left (\overline{Q^{\varepsilon, T}_{0,1}} \right ) \quad \forall
 \varepsilon \in (0,T),  \;  \forall \ell \in (0,1).$$
Now, for every fixed $\varepsilon >0, $ Proposition \ref{prop1}
implies that
$$s(t) \in
H^{2m+2+\ell,\frac{2m+2+\ell}{2}}
 \left (\overline{Q^{\tilde{\varepsilon}, T}_{0,1}} \right ) \quad
  \forall \tilde{\varepsilon} \in (\varepsilon, T).$$
Thus, it follows that $ s(t) \in C^{m+1} ((0,T]).$ This completes the
proof of Theorem~\ref{thm1}.
\end{proof}

\section{Upper bounds for the smoothness of $s(t)$ }

Now we are going to explain that the smoothness of $s(t) $ (in terms
of H\"older scale) is bounded above by the smoothness of $f(t).$

\begin{Proposition}
\label{prop2} Let $v(\xi, t) $ be the function defined by
(\ref{11.0}) (and satisfying (\ref{11.1})--(\ref{11.3})).
 Then for every $m \in \mathbb{N},$ $\varepsilon >0$ and $T> \varepsilon $
 the following implication holds:
\begin{equation}
\label{s1} s(t) \in H^{m+4+\ell,\frac{m+4+\ell}{2}}
 \left (\overline{Q^{\varepsilon, T}_{0,1}} \right )  \Longrightarrow
v(\xi, t) \in H^{m+1+\ell,\frac{m+1+\ell}{2}}
 \left (\overline{Q^{\tilde{\varepsilon}, T}_{0,1}} \right ) \;\;
 \forall \tilde{\varepsilon} \in (\varepsilon, T).
\end{equation}
\end{Proposition}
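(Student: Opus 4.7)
The plan is to bootstrap the regularity of $v$ by induction on $k\in\{1,\ldots,m\}$, in close analogy with the proof of Proposition~\ref{prop1}(b). The essential twist is that $f(t)=v(0,t)$ cannot serve as a Dirichlet datum at $\xi=0$ here, because no smoothness of $f$ is assumed; all smoothness at $\xi=0$ has to be extracted instead from the nonlocal identity~(\ref{11.3}), which, solved for $v_{\xi}(0,t)$, reads
\[
v_{\xi}(0,t)=s(t)\bigl[(v(1,t)-\lambda-\sigma)\,s'(t)-\lambda\sigma\,s(t)-s''(t)\bigr].
\]
This rearrangement of~(\ref{11.3}) converts smoothness of $s$ and of the trace $v(1,\cdot)$ into a Neumann datum at $\xi=0$, to complement the free Neumann condition $v_{\xi}(1,t)=0$ at $\xi=1$.

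The base case $v\in H^{1+\ell,(1+\ell)/2}\bigl(\overline{Q^{0,T}_{0,1}}\bigr)$ is Proposition~\ref{prop1}(a). For the inductive step from $k$ to $k+1$, I would rerun the two cutoff arguments of Lemma~\ref{lem1}, with the first reinterpreted. The cutoff near $\xi=1$ goes through verbatim: it uses only $v_{\xi}(1,t)=0$ and the coefficients of $L$, which under $s\in H^{m+4+\ell,(m+4+\ell)/2}$ are far smoother than~(\ref{21.3}) requires, so Theorem~5.3 of \cite[Ch.~4]{LUS} produces $v\in H^{k+1+\ell,(k+1+\ell)/2}$ on $\overline{Q^{\tilde\varepsilon,T}_{1/2,1}}$ and in particular $v(1,\cdot)\in H^{k+1+\ell,(k+1+\ell)/2}$. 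The cutoff near $\xi=0$ must be treated as a Neumann cutoff: since the function $\psi_{1}$ used there equals $1$ on $[0,1/2]$, so $\psi_{1}'(0)=0$, the localised $w_{1}=v\psi$ satisfies $\partial_{\xi}w_{1}(0,t)=v_{\xi}(0,t)\,\psi_{2}(t)$, and the displayed formula together with the trace bound just obtained yields $v_{\xi}(0,\cdot)\in H^{k+\ell,(k+\ell)/2}$. The resulting Neumann–Neumann initial-boundary value problem on $Q^{\varepsilon,T}_{0,\delta}$ is again covered by Theorem~5.3 of \cite[Ch.~4]{LUS}, producing $v\in H^{k+1+\ell,(k+1+\ell)/2}$ on $\overline{Q^{\tilde\varepsilon,T}_{0,1/2}}$. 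Patching the two halves and iterating, with a small shrinkage of the time interval at each step, delivers~(\ref{s1}).

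The only non-routine point is the need for a Neumann-at-$\xi=0$ version of Lemma~\ref{lem1}, but this is a direct reading of Theorem~5.3 of \cite[Ch.~4]{LUS}, which already handles mixed and Neumann–Neumann problems; the cutoff bookkeeping is identical to that in the proof of Lemma~\ref{lem1}. The real conceptual obstacle is the rigid ordering of the argument dictated by nonlocality: at each induction step one must first upgrade $v$ near $\xi=1$ via the trivial Neumann condition, then feed the improved trace $v(1,\cdot)$ into~(\ref{11.3}) to manufacture a smooth Neumann datum at $\xi=0$, and only afterwards upgrade $v$ near $\xi=0$. A direct application of Lemma~\ref{lem1} with $f$ as Dirichlet data at $\xi=0$ is impossible, since $f=v(0,\cdot)$ is always one order less regular than what the Dirichlet hypothesis of Lemma~\ref{lem1} would require.
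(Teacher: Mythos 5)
Your proposal is correct and follows essentially the same route as the paper: the key step in both is to solve (\ref{11.3}) for $v_{\xi}(0,t)=s(t)\bigl[(v(1,t)-\lambda-\sigma)s'(t)-\lambda\sigma s(t)-s''(t)\bigr]$, thereby turning the nonlocal condition into a Neumann datum at $\xi=0$, and then to bootstrap via a Neumann--Neumann analogue of Lemma~\ref{lem1} (the paper's Lemma~\ref{lem2}), inducting until $v\in H^{m+1+\ell,\frac{m+1+\ell}{2}}$. The only differences are cosmetic: the paper's Lemma~\ref{lem2} dispenses with spatial cutoffs entirely (a single cutoff in $t$ suffices on the whole strip, since both boundary conditions are Neumann), and your preliminary upgrade of the trace $v(1,\cdot)$ near $\xi=1$ before treating $\xi=0$ is harmless but unnecessary, because the Neumann datum in Lemma~\ref{lem2} need only lie in $H^{k+\ell,\frac{k+\ell}{2}}$, one order below the target, which the unimproved trace already supplies.
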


In the proof of Proposition~\ref{prop2} we need the following
statement.
\begin{Lemma}
\label{lem2} Let $\varepsilon>0, $  $\ell \in (0,1), \; k \in
\mathbb{N}, $
\begin{equation}
\label{21.0a} w(\xi, t) \in H^{k+\ell,\frac{k+\ell}{2}}
 \left (\overline{Q^{\varepsilon, T}_{0,1}} \right )
 \end{equation}
and $w_{\xi \xi} $ exists in $ Q^{\varepsilon, T}_{0,1}$ in the case
$k=1,$ and let $w(\xi,t)$ satisfy the equation
 \begin{equation}
\label{21.1a} \tilde{L} w:=w_t - a(\xi,t) w_{\xi \xi} - b (\xi, t)
w_\xi - c (\xi, t) w = F(\xi,t), \quad  (\xi, t) \in Q^{\varepsilon,
T}_{0,1},
\end{equation}
where
 \begin{equation}
\label{21.3a} a, b, c, F  \in H^{k-1+\ell,\frac{k-1+\ell}{2}}
 \left (\overline{Q^{\varepsilon, T}_{0,1}} \right ), \quad a(\xi,t) \geq const
 >0.
  \end{equation}
If
 \begin{equation}
\label{21.2a} w_\xi (0,t)= h(t), \quad w_\xi (1,t) = g(t), \quad
\varepsilon \leq t \leq T,
\end{equation}
with
 \begin{equation}
\label{21.4a} h(t) \in H^{k+\ell,\frac{k+\ell}{2}}
 \left (\overline{Q^{\varepsilon, T}_{0,1}} \right ), \quad
 g(t)  \in H^{k+\ell,\frac{k+\ell}{2}}
 \left (\overline{Q^{\varepsilon, T}_{0,1}} \right ),
 \end{equation}
then
\begin{equation}
\label{21.5a} w(\xi, t) \in H^{k+1+\ell,\frac{k+1+\ell}{2}}
 \left (\overline{Q^{\tilde{\varepsilon}, T}_{0,1}} \right ) \quad \forall
 \tilde{\varepsilon} \in (\varepsilon, T).
 \end{equation}
\end{Lemma}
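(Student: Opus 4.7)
The plan is to adapt the proof of Lemma~\ref{lem1} almost verbatim, the only substantive change being how we treat the left boundary $\xi=0$, where the Dirichlet condition $w(0,t)=f(t)$ of Lemma~\ref{lem1} has been replaced by the Neumann condition $w_\xi(0,t)=h(t)$. This is why the hypothesis on $h$ in (\ref{21.4a}) is one Hölder-order weaker than the hypothesis on $f$ in (\ref{21.4}), matching the loss of one derivative in the Schauder theory when one prescribes derivative (rather than value) data on the boundary.

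Fix $\tilde\varepsilon\in(\varepsilon,T)$ and choose $\varepsilon_1\in(\varepsilon,\tilde\varepsilon)$, $\delta\in(1/2,1)$, $\tilde\delta\in(0,1/2)$, $\delta_1\in(\tilde\delta,1/2)$, together with the cutoff functions $\psi_1,\psi_2,\tilde\psi_1$ exactly as in the proof of Lemma~\ref{lem1}. First I treat the piece near $\xi=0$: set $w_1(\xi,t)=w(\xi,t)\psi_1(\xi)\psi_2(t)$. Since $\psi_1\equiv 1$ on a neighborhood of $0$, one has $\psi_1'(0)=0$, so differentiating gives $\partial_\xi w_1(0,t)=h(t)\psi_2(t)$. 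At $\xi=\delta$ the cutoff vanishes to all orders, so $w_1(\delta,t)=0$; the initial condition is $w_1(\xi,\varepsilon)=0$; and $w_1$ satisfies $\tilde L w_1=F_1$ with $F_1$ given exactly by the formula (\ref{31.4}). By (\ref{21.0a}), (\ref{21.3a}) one has $F_1\in H^{k-1+\ell,(k-1+\ell)/2}(\overline{Q^{\varepsilon,T}_{0,\delta}})$, and by (\ref{21.4a}) together with the smoothness of $\psi_2$ the Neumann datum $h\psi_2$ lies in $H^{k+\ell,(k+\ell)/2}$. I then apply the Schauder regularity theorem for parabolic equations with mixed Neumann/Dirichlet boundary conditions (Theorem~5.3 of \cite[Ch.~4]{LUS}) on $Q^{\varepsilon,T}_{0,\delta}$ to conclude $w_1\in H^{k+1+\ell,(k+1+\ell)/2}(\overline{Q^{\varepsilon,T}_{0,\delta}})$, which in view of $\psi\equiv 1$ on $[0,1/2]\times[\tilde\varepsilon,T]$ yields
\[
w\in H^{k+1+\ell,\,(k+1+\ell)/2}\bigl(\overline{Q^{\tilde\varepsilon,T}_{0,1/2}}\bigr).
\]

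The piece near $\xi=1$ is handled by the identical argument as in Lemma~\ref{lem1}: set $w_2=w\,\tilde\psi_1\psi_2$, observe that $w_2$ solves (\ref{1.1a})--(\ref{1.3a}) with $F_2$ as in (\ref{31.4a}), apply Theorem~5.3 of \cite[Ch.~4]{LUS} on $Q^{\varepsilon,T}_{\tilde\delta,1}$ (the right-hand boundary condition $w_\xi(1,t)=g(t)$ is already of Neumann type, so nothing changes from Lemma~\ref{lem1}), and conclude $w\in H^{k+1+\ell,(k+1+\ell)/2}(\overline{Q^{\tilde\varepsilon,T}_{1/2,1}})$. Splicing the two local estimates together gives (\ref{21.5a}).

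The only point that requires care — and the main obstacle compared with Lemma~\ref{lem1} — is verifying the applicability of the Schauder theorem with Neumann data at $\xi=0$. One must check that the compatibility conditions at the corner $(0,\varepsilon)$ hold to the order required for $H^{k+1+\ell,(k+1+\ell)/2}$ regularity. This is automatic here because $\psi_2(t)\equiv 0$ for $t\le\varepsilon_1$, so $w_1$ and all its $(\xi,t)$-derivatives of every order vanish identically on a neighborhood of $\{t=\varepsilon\}$, trivially matching any zero-order initial data and any boundary data at $t=\varepsilon$. Consequently the LSU mixed-boundary Schauder estimate applies for every $k\in\mathbb{N}$, and the induction structure already carried out after Lemma~\ref{lem1} is not needed: the present statement is a single-step regularity gain.
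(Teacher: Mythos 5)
Your argument is correct in substance, but it takes a more complicated route than the paper's own proof, and the extra complication introduces the one genuinely delicate point in your write-up. The paper exploits the fact that, unlike in Lemma~\ref{lem1}, the boundary conditions (\ref{21.2a}) are of Neumann type at \emph{both} ends $\xi=0$ and $\xi=1$: it therefore skips the spatial localization entirely, multiplies $w$ by a pure time cutoff $\hat{\psi}(t)$ (with $\hat{\psi}\equiv 1$ for $t\geq\tilde{\varepsilon}$, $\hat{\psi}\equiv 0$ for $t\leq\varepsilon_1$), and applies Theorem~5.3 of \cite[Ch.4]{LUS} once on the whole strip $Q^{\varepsilon,T}_{0,1}$ to the problem with Neumann data $h\hat{\psi}$ and $g\hat{\psi}$ and vanishing initial data. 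You instead replicate the two-piece spatial decomposition of Lemma~\ref{lem1}, which was needed there only because the full-strip problem had Dirichlet data at $\xi=0$ and Neumann data at $\xi=1$ simultaneously; here it buys nothing and costs you a local problem on $Q^{\varepsilon,T}_{0,\delta}$ with Neumann data at $\xi=0$ and Dirichlet data at $\xi=\delta$, i.e.\ a genuinely mixed boundary value problem, for which Theorem~5.3 of \cite[Ch.4]{LUS} (a pure second-/third-kind boundary condition result) is not literally the right citation. This is easily repaired: since $\psi_1$ vanishes identically in a neighborhood of $\xi=\delta$, you are free to record the artificial boundary condition there as $\partial_\xi w_1(\delta,t)=0$ rather than $w_1(\delta,t)=0$, after which the left piece is a pure Neumann problem and Theorem~5.3 applies verbatim --- at which point your left piece alone, extended to $\delta$ close to $1$, essentially reproduces the paper's one-shot argument. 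Your observations that the time cutoff trivializes the corner compatibility conditions and that the lemma is a single-step regularity gain (the induction living in Proposition~\ref{prop2}, not here) are both correct and match the paper.
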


\begin{proof}
The proof of this statement  is similar to the proof of
Lemma~\ref{lem1}. Indeed, let $\tilde {\varepsilon} \in (\varepsilon,
T); $ choose $\varepsilon_1 \in (\varepsilon, \tilde{\varepsilon}) $
and $\hat{\psi} (t) \in C^\infty (\mathbb{R}) $ such that $
\hat{\psi} (t) = 1 $ for $t \geq \tilde{\varepsilon}$ and $\hat{\psi}
(t) =0 $ for $t \leq \varepsilon_1.$ Set $\tilde{w}(\xi, t) = w(\xi,
t) \cdot \hat{\psi} (t); $ then the function $\tilde{w}(\xi, t)$ is a
solution of the boundary value problem
$$
\tilde{L} \tilde{w} = \tilde{F}, \quad  \tilde{F} (\xi, t) = F(\xi,
t) \cdot \hat{\psi} (t) + w (\xi, t) \hat{\psi}^\prime (t), \quad
(\xi, t) \in Q^{\varepsilon, T}_{0,1},
$$
$$
\tilde{w}_\xi (0,t) = h(t)\hat{\psi}(t), \quad \tilde{w}_\xi (1,t) =
g(t)\hat{\psi}(t), \quad \tilde{w} (\xi, 0) =0.
$$
From (\ref{21.0a}) and (\ref{21.3a}) it follows that  $\tilde{F}
(\xi, t) \in H^{k-1+\ell,\frac{k-1+\ell}{2}}  \left
(\overline{Q^{\varepsilon, T}_{0,1}} \right ). $ Now, by Theorem 5.3
of \cite[Ch.4]{LUS}, we conclude that the above boundary value
problem has a unique solution
$$ \tilde{w} (\xi,t) \in H^{k+1+\ell,\frac{k+1+\ell}{2}}
 \left (\overline{Q^{\varepsilon, T}_{0,1}} \right ).$$
Thus, taking into account that $\hat{\psi}(t)=1 $ for $t \geq
\tilde{\varepsilon},$ we obtain that $w(\xi, t) \in
H^{k+1+\ell,\frac{k+1+\ell}{2}}
 \left (\overline{Q^{\tilde{\varepsilon}, T}_{0,1}} \right ).$
The proof of Lemma~\ref{lem2} is complete.
\end{proof}

\begin{proof}[Proof of Proposition \ref{prop2}] We prove the claim by
induction in $m.$

Let $m=1; $ then we assume that $s(t) \in H^{5+\ell,\frac{5+\ell}{2}}
 \left (\overline{Q^{\varepsilon, T}_{0,1}} \right ) $
and prove that $v(\xi, t) \in H^{2+\ell,\frac{2+\ell}{2}}
 \left (\overline{Q^{\tilde{\varepsilon}, T}_{0,1}} \right ) \;\;
 \forall \tilde{\varepsilon} \in (\varepsilon, T).$
Indeed, in view of (\ref{11.2}) and (\ref{11.3}), the function
$v(\xi,t) $ satisfies the boundary conditions
\begin{equation}
\label{bc} v_{\xi} (0,t)=h(t), \quad v_{\xi} (1,t)=0, \quad
\varepsilon \leq t \leq T,
\end{equation}
where
\begin{equation}
\label{h} h(t):= s(t) \, \left [(v(1,t)-\lambda -\sigma) s^\prime
(t)-\lambda \sigma s(t) - s^{\prime \prime}(t)\right ].
\end{equation}

From the above assumptions on $s(t),$ and from the assertion (a) in
Proposition~\ref{prop1}, it follows that $$h(t) \in
H^{1+\ell,\frac{1+\ell}{2}}
 \left (\overline{Q^{\varepsilon, T}_{0,1}} \right ).$$
Moreover, since $s^\prime (t) \in H^{3+\ell,\frac{3+\ell}{2}}
 \left (\overline{Q^{\varepsilon, T}_{0,1}} \right ),$
we obtain in view of (\ref{11.1}) that the coefficients of the
operator $L$ belong to the space $H^{3+\ell,\frac{3+\ell}{2}}
 \left (\overline{Q^{\varepsilon, T}_{0,1}} \right ).$

Therefore,  applying Lemma \ref{lem2} in the case when $k=1,$
$\tilde{L}=L,$ $w=v$ and boundary conditions given by (\ref{bc}), we
conclude that
$$v(\xi, t) \in H^{2+\ell,\frac{2+\ell}{2}}
 \left (\overline{Q^{\tilde{\varepsilon}, T}_{0,1}} \right ) \quad
 \forall \tilde{\varepsilon} \in (\varepsilon, T),$$
i.e., (\ref{s1}) holds for $m=1.$

Assume that the assertion holds for some $m \geq 1. $ We will prove
that (\ref{s1}) holds for $m+1.$ Suppose
$$
s(t) \in H^{m+5+\ell,\frac{m+5+\ell}{2}}
 \left (\overline{Q^{\varepsilon, T}_{0,1}} \right ).
$$
Then from the inductive hypothesis it follows that
$$
v(\xi,t) \in H^{m+1+\ell,\frac{m+1+\ell}{2}}
 \left (\overline{Q^{\tilde{\varepsilon}, T}_{0,1}} \right )  \quad
 \forall \tilde{\varepsilon} \in (\varepsilon,T).
$$
Therefore, in view of (\ref{h}) one can easily see that
$$
h(t) \in H^{m+1+\ell,\frac{m+1+\ell}{2}}
 \left (\overline{Q^{\tilde{\varepsilon}, T}_{0,1}} \right )  \quad
 \forall \tilde{\varepsilon} \in (\varepsilon,T).
$$
Hence, applying Lemma~\ref{lem2} in the case $k=m+1 $ we conclude
that
$$
v(\xi,t) \in H^{m+2+\ell,\frac{m+2+\ell}{2}}
 \left (\overline{Q^{\tilde{\varepsilon}, T}_{0,1}} \right )  \quad
 \forall \tilde{\varepsilon} \in (\varepsilon,T),
$$
i.e., (\ref{s1}) holds for $m+1.$ This completes the proof of
Proposition~\ref{prop2}.
\end{proof}

\begin{Theorem}
\label{thm2}
 Suppose the pair of functions $(u(x,t),
s(t))$ is a classical solution of Problem $P$ satisfying (i)-(iii).
 If $s(t) \in C^{m+2} ((0,T]),\; m \in \mathbb{N}, \;T >0, $
 then $f(t) \in C^m ((0,T]).$

Moreover, if $s(t) \in C^\infty ((0,\infty)),$ then $f(t) \in
C^\infty ((0,\infty)).$
\end{Theorem}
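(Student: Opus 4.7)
The plan is to reduce Theorem~\ref{thm2} to Proposition~\ref{prop2} by passing, in both directions, through the embedding of $C^k$-smoothness in time into the anisotropic H\"older spaces $H^{M+\ell,\frac{M+\ell}{2}}$ used throughout Section~2.

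First I fix $\varepsilon\in(0,T)$ and an arbitrary $\ell\in(0,1)$. The assumption $s(t)\in C^{m+2}((0,T])$ implies that on the compact subinterval $[\varepsilon,T]$ all derivatives $s^{(r)}$ with $r\le m+2$ are continuous, and in particular $s^{(m+1)}$ is Lipschitz there. Viewing $s$ as a function of $(\xi,t)$ that is independent of $\xi$, only the terms with no $\xi$-derivatives survive in the norm of $H^{2m+3+\ell,\frac{2m+3+\ell}{2}}$; the one non-obvious contribution, the $\tfrac{1+\ell}{2}$-H\"older seminorm of $s^{(m+1)}$ in $t$ (the sum indexed by $k+2r=M-1=2m+2$), is controlled by the Lipschitz constant. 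Thus, exactly as in the argument at the start of the proof of Theorem~\ref{thm1},
\begin{equation*}
s(t)\in H^{2m+3+\ell,\frac{2m+3+\ell}{2}}\!\left(\overline{Q^{\varepsilon,T}_{0,1}}\right).
\end{equation*}

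Next I apply Proposition~\ref{prop2} with its parameter replaced by $2m-1$ (a positive integer since $m\ge 1$). The required hypothesis becomes $s\in H^{(2m-1)+4+\ell}=H^{2m+3+\ell}$, which has just been verified, and the conclusion is
\begin{equation*}
v(\xi,t)\in H^{2m+\ell,\frac{2m+\ell}{2}}\!\left(\overline{Q^{\tilde\varepsilon,T}_{0,1}}\right) \quad \forall\,\tilde\varepsilon\in(\varepsilon,T).
\end{equation*}
Restricting to $\xi=0$ and recalling the boundary condition $v(0,t)=f(t)$, the definition of this H\"older space yields the continuity of $D_t^r f$ for every $r$ with $2r\le 2m$, hence $f\in C^m([\tilde\varepsilon,T])$. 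Since $\varepsilon\in(0,T)$ and $\tilde\varepsilon\in(\varepsilon,T)$ were arbitrary, this gives $f\in C^m((0,T])$. The $C^\infty$ assertion then follows by iterating this argument over all $m\in\mathbb{N}$.

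The only real obstacle is index bookkeeping: one must choose the substitution into Proposition~\ref{prop2} so that the $+4\mapsto +1$ shift it supplies exactly absorbs the $+2\mapsto 0$ gap between the hypothesis on $s$ and the desired conclusion on $f$, and this is precisely what the substitution $m\mapsto 2m-1$ achieves. No additional PDE analysis is required, as the parabolic regularity work has already been encapsulated in Lemma~\ref{lem2} and Proposition~\ref{prop2}.
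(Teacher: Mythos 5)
Your proposal is correct and follows essentially the same route as the paper: embed $s\in C^{m+2}((0,T])$ into $H^{2m+3+\ell,\frac{2m+3+\ell}{2}}$ on compact time intervals, apply Proposition~\ref{prop2} with its parameter set to $2m-1$ to obtain $v\in H^{2m+\ell,\frac{2m+\ell}{2}}$, and restrict to $\xi=0$ to recover $f\in C^m$. Your explicit verification of the $\tfrac{1+\ell}{2}$-H\"older seminorm of $s^{(m+1)}$ via its Lipschitz continuity is a detail the paper leaves implicit, but the argument is the same.
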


\begin{proof}
Suppose $s(t) \in C^{m+2} ((0,T]);$ then  $$s(t) \in
H^{2m+3+\ell,\frac{2m+3+\ell}{2}}
 \left (\overline{Q^{\varepsilon, T}_{0,1}} \right )  \quad
 \forall \varepsilon \in (0,T).$$
Therefore, by Proposition~\ref{prop2} we obtain that
$$
v(\xi,t) \in H^{2m+\ell,\frac{2m+\ell}{2}}
 \left (\overline{Q^{\tilde{\varepsilon}, T}_{0,1}} \right )  \quad
 \forall \tilde{\varepsilon} \in (\varepsilon,T).
$$
Since $f(t) = v(0,t), $ it follows that
$$
f(t) \in H^{2m+\ell,\frac{2m+\ell}{2}}
 \left (\overline{Q^{\tilde{\varepsilon}, T}_{0,1}} \right )  \quad
 \forall \tilde{\varepsilon} \in (\varepsilon,T);
$$
thus $f(t) \in C^m ([\tilde{\varepsilon},T]) \quad
 \forall \tilde{\varepsilon} \in (\varepsilon,T), \; 0< \varepsilon <T, $
which implies that $f(t) \in C^m ((0,T]).$ The proof of
Theorem~\ref{thm2} is complete.
\end{proof}

Theorem~\ref{thm1} and Theorem~\ref{thm2} prove, respectively, that
the condition $f(t) \in C^\infty ((0,\infty))$ is necessary and
sufficient for $s(t) \in C^\infty ((0,\infty)).$ In other words, the
following holds.

\begin{Corollary}
\label{cor1} In Problem P, the free boundary $x=s(t), \; t \in
(0,\infty)$  is an infinitely differentiable curve if and only if
$f(t) \in C^\infty ((0,\infty)).$
\end{Corollary}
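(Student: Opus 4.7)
The plan is to observe that Corollary~\ref{cor1} is essentially a bookkeeping consequence of Theorem~\ref{thm1} and Theorem~\ref{thm2}, combined over all $m\in\mathbb{N}$ and all finite time intervals $(0,T]$. There is no new analytic input required: both implications follow at once from the already-established quantitative regularity transfers, since $C^\infty$ on $(0,\infty)$ is the intersection, over all positive integers $m$ and all $T>0$, of $C^m((0,T])$.

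First I would prove the sufficiency. Assume $f(t)\in C^\infty((0,\infty))$. Fix an arbitrary $T>0$ and an arbitrary integer $m\geq 2$. Then in particular $f(t)\in C^m((0,T])$, so by Theorem~\ref{thm1} we obtain $s(t)\in C^{m+1}((0,T])$. Since $m$ and $T$ are arbitrary, this shows $s(t)\in C^\infty((0,\infty))$.

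Next I would prove the necessity in the same style. Assume $s(t)\in C^\infty((0,\infty))$, fix $T>0$ and an arbitrary integer $m\in\mathbb{N}$. Then $s(t)\in C^{m+2}((0,T])$, so by Theorem~\ref{thm2} we get $f(t)\in C^m((0,T])$. Letting $m$ and $T$ run through all admissible values concludes that $f(t)\in C^\infty((0,\infty))$.

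Because the substantive analytic work (Schauder estimates in anisotropic H\"older spaces, the cutoff arguments of Lemmas~\ref{lem1} and~\ref{lem2}, and the induction in Propositions~\ref{prop1} and~\ref{prop2}) has already been carried out, there is no genuine obstacle at this stage; the only thing one must be a little careful about is the quantifier order, namely that the $C^m$ statements in Theorems~\ref{thm1} and~\ref{thm2} are proved on every compact subinterval of $(0,\infty)$ rather than globally, which is exactly what is needed to conclude $C^\infty$ on the open half-line after letting $m\to\infty$ and $T\to\infty$.
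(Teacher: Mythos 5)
Your proposal is correct and matches the paper's own argument: the paper likewise obtains the corollary directly by combining Theorem~\ref{thm1} (sufficiency) and Theorem~\ref{thm2} (necessity), whose ``in particular'' and ``moreover'' clauses already record the $C^\infty$ versions you derive by letting $m$ and $T$ range over all admissible values.
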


\end{document}